\newtheorem{prop}{Proposition}[section]
\newtheorem{thm}[prop]{Theorem}
\newtheorem{lem}[prop]{Lemma}
\theoremstyle{definition}
\newtheorem{rem}[prop]{Remark}
\newtheorem*{ack}{Acknowledgement}
\def\co{\colon\thinspace}
\newcommand{\C}{\mathbb C}
\newcommand{\rmd}{\mathrm d}
\newcommand{\rme}{\mathrm e}
\newcommand{\R}{\mathbb R}
\newcommand{\ra}{\rightarrow}
\DeclareMathOperator{\fs}{\mathrm{FS}}
\begin{document}

\author{Gabriele Benedetti}
\address{Mathematisches Institut, Westf\"alische Wilhelms-Universit\"at M\"unster, Einsteinstr. 62,
D-48149 M\"unster, Germany}
\email{benedett@uni-muenster.de, kai.zehmisch@uni-muenster.de}
\author{Kai Zehmisch}

\title[Periodic orbits for magnetic systems]
{On the existence of periodic orbits for magnetic systems on the two-sphere}

\date{May 5, 2015}

\begin{abstract}
We prove that there exist periodic orbits on almost all compact regular energy
levels of a Hamiltonian function defined on a twisted cotangent bundle over
the two-sphere. As a corollary, given any Riemannian two-sphere and a magnetic
field on it, there exists a closed magnetic geodesic for almost all
kinetic energy levels.
\end{abstract}

\subjclass[2010]{37J45, 53D40}
\thanks{GB is supported by the SFB 878 - Groups, Geometry and Actions.
KZ is partially supported by DFG grant ZE 992/1-1.}

\maketitle


\section{Introduction\label{sec:intro}}

We consider the cotangent bundle of the $2$-sphere
together with the bundle projection $\pi\co T^*S^2\ra S^2$.
The Liouville $1$-form $\lambda$ on $T^*S^2$
is defined by $\lambda_u=u\circ T\pi$
for all co-vectors $u\in T^*S^2$.
The differential $\rmd\lambda$ provides $T^*S^2$
with the canonical symplectic structure.
For any $2$-form $\sigma$ on $S^2$,
which is closed for dimensional reasons,
the so-called {\bf twisted symplectic form}
\[
\omega_{\sigma}=\rmd\lambda+\pi^*\sigma
\]
is defined.
The equations of motion
of the Hamiltonian system
given by $\omega_{\sigma}$
and the sum of kinetic and potential energy
describes the motion of a charged particle on $S^2$
under the influence of
the force field induced by the potential and
the magnetic field induced by $\sigma$.
We call $\sigma$ the {\bf magnetic form}
and $\big(T^*S^2,\omega_{\sigma}\big)$
a {\bf twisted cotangent bundle}.

Let $H$ be a Hamiltonian function
on a twisted cotangent bundle over $S^2$.
For any compact regular energy surface $\{H=E\}$
we find $\varepsilon>0$
such that all energy levels of energy
between $E-\varepsilon$ and
$E+\varepsilon$
are compact and regular.
We say that $\{H=E\}$
has the {\bf almost existence property}
if for all such $\varepsilon>0$
the set of energy values in
$(E-\varepsilon,E+\varepsilon)$,
for which the corresponding energy surface of $H$
admits a periodic solution,
has measure $2\varepsilon$.

\begin{thm}
  \label{thm:main}
 Any compact regular energy surface
 of a Hamiltonian function
 on a twisted cotangent bundle over $S^2$
 has the almost existence property.
\end{thm}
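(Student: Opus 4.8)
The plan is to deduce the almost existence property from a finiteness statement for the Hofer--Zehnder capacity. By the almost existence theorem of Hofer--Zehnder and Struwe, if $\{H=E\}$ is a compact regular energy surface possessing a relatively compact open neighbourhood $U\subset T^*S^2$ with $c_{\mathrm{HZ}}(U,\omega_\sigma)<\infty$, then a closed orbit exists on $\{H=E'\}$ for a full-measure set of values $E'$ near $E$, which is exactly the almost existence property. Since $\{H=E\}$, together with the nearby regular levels, is compact, it is contained in some open disk cotangent bundle $\mathring D^*_RS^2$ (with respect to an arbitrary auxiliary metric on $S^2$), whose closure is compact; so the theorem follows once we show
\[
c_{\mathrm{HZ}}\bigl(\mathring D^*_RS^2,\omega_\sigma\bigr)<\infty\qquad\text{for every }R>0 .
\]
Applied to the fibrewise quadratic Hamiltonian $H(u)=\tfrac12|u|_g^2$, whose positive regular levels are compact sphere bundles, this also recovers the assertion on closed magnetic geodesics for almost all kinetic energies.

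To bound the capacity I would combine its monotonicity under symplectic embeddings with the finiteness of the Hofer--Zehnder capacity of a closed symplectic $4$-manifold such as $S^2\times S^2$ with a split form $\Omega_{a,b}$ of factor areas $a,b>0$, which is symplectically uniruled by the spheres $S^2\times\{\mathrm{pt}\}$ and hence has finite capacity (Lu). Thus it suffices to embed $(\mathring D^*_RS^2,\omega_\sigma)$ symplectically into such a manifold. The zero section of $(T^*S^2,\omega_\sigma)$ is an embedded symplectic sphere of self-intersection $-2$ and symplectic area $\int_{S^2}\sigma$ --- Lagrangian precisely when this integral vanishes --- and I would model a neighbourhood of it on the anti-diagonal $\overline\Delta=\{(x,-x)\}\subset S^2\times S^2$, which satisfies $[\overline\Delta]^2=-2$ and carries, for the split form, an area equal to $a-b$. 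Choosing $a,b$ so that $a-b=\int_{S^2}\sigma$ matches the symplectic normal data of $\overline\Delta$ with that of the zero section, so the symplectic (respectively, Weinstein) neighbourhood theorem identifies a neighbourhood of $\overline\Delta$ in $(S^2\times S^2,\Omega_{a,b})$ with a disk bundle $(\mathring D^*_rS^2,\omega_\sigma)$ for some radius $r>0$.

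The remaining work --- and, I expect, the main obstacle --- is to control this radius $r$ and to reach arbitrarily large $R$. For this I would rescale in the fibres: the dilation $m_c\colon u\mapsto cu$ satisfies $m_c^*\omega_\sigma=c\,\rmd\lambda+\pi^*\sigma=c\,\omega_{\sigma/c}$ and carries $\mathring D^*_{R/c}S^2$ onto $\mathring D^*_RS^2$, so $(\mathring D^*_RS^2,\omega_\sigma)$ is symplectomorphic to $\bigl(\mathring D^*_{R/c}S^2,\,c\,\omega_{\sigma/c}\bigr)$; taking $c$ large trades a large radius and a fixed twist for an arbitrarily small radius and an arbitrarily small twist, at the harmless cost of rescaling the ambient symplectic form, which stays closed. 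As the twist $\int_{S^2}(\sigma/c)$ tends to $0$, the forms $\Omega_{a,b}$ approach the monotone form $\Omega_{1,1}$, for which $\overline\Delta$ is a genuine Lagrangian sphere with a Weinstein neighbourhood of some definite size $r_0>0$; applying the neighbourhood theorem with estimates uniform along this degeneration then yields disk-bundle neighbourhoods of radius at least $r_0/2$, which eventually exceeds $R/c$. This gives the desired embedding $(\mathring D^*_RS^2,\omega_\sigma)\hookrightarrow(S^2\times S^2,c\,\Omega_{a,b})$, hence the capacity bound, hence the theorem. The delicate point throughout is precisely this uniform size control; an alternative that renders the disk-bundle size manifest would be to realize $(T^*S^2,\omega_\sigma)$ as the complement of a section in a Hirzebruch surface $\mathbb F_2$ and to enlarge the fibre area of a Kähler form there, or to construct the embedding explicitly using the $SO(3)$-symmetry shared by $T^*S^2$, $S^2\times S^2$ and the round metric.
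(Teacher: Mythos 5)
Your reduction of the theorem to a finiteness statement for the Hofer--Zehnder capacity of relatively compact open sets, and your use of Lu's capacity bound for $S^2\times S^2$ with a split form, coincide with the paper's strategy. The gap is in the middle step, the construction of the symplectic embedding into $S^2\times S^2$. You propose to apply the symplectic neighbourhood theorem to the zero section of $(T^*S^2,\omega_\sigma)$, matched with the anti-diagonal $\overline\Delta\subset(S^2\times S^2,\Omega_{a,b})$. But the restriction of $\omega_\sigma$ to the zero section is $\sigma$ itself, so the zero section is a symplectic submanifold only when $\sigma$ is nowhere vanishing, and it is Lagrangian only when $\sigma\equiv 0$ --- not, as you write, when $\int_{S^2}\sigma=0$. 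For a general magnetic form $\sigma$, which may vanish on an open set or change sign, the zero section is neither symplectic nor Lagrangian, no neighbourhood theorem applies, and rescaling by $m_c$ does not help since $\sigma/c$ vanishes wherever $\sigma$ does. Even in the favourable case where $\sigma$ is an area form, the ``uniform size control along the degeneration'' that you flag as the delicate point is not a technicality but the entire content of the theorem: the Weinstein and symplectic neighbourhood theorems give neighbourhoods of no definite size, and the Lagrangian and symplectic cases have genuinely different local models, so there is no obvious way to interpolate estimates between them.

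The paper avoids the zero section altogether. Its key observation is that on the complement of the zero section, identified with the symplectisation $\R\times\R P^3$ of the unit cotangent bundle, the pullback $\pi^*\sigma$ descends to a closed $2$-form on $\R P^3$ and hence is exact there, since $H^2_{\rm dR}(\R P^3)=0$. Cutting off a primitive $\tau$ at large radius produces a symplectic form equal to $\omega_\sigma$ on any prescribed compact set and equal to $\rmd\lambda$ near infinity (the cut-off lemma); the resulting standard co-disc bundle embeds into $\C P^1\times\C P^1$ with a large split form, and the closed symplectic $4$-manifold so obtained is recognised as a split $S^2\times S^2$ by McDuff's classification of rational symplectic $4$-manifolds (minimality from the even intersection form, plus the symplectic sphere of square $2$ given by the diagonal). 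That classification theorem is what replaces the size control you were missing. If you want to salvage your approach, you would need to import some such global recognition result; the local neighbourhood matching at the zero section cannot work as stated.
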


By passage to the double cover
the theorem implies
the almost existence property
for Hamiltonian functions on twisted cotangent bundles
over the real projective plane.

We remark that the third homology group
of $T^*S^2$ vanishes.
Therefore,
each compact connected orientable hypersurface $M$
in $T^*S^2$ is the boundary of a relatively compact domain $D$.
By the results of
Hofer-Zehnder \cite{hz87,hz94}
and Struwe \cite{str90}
the almost existence property for $M$,
viewed as the regular level set of a function,
follows provided that the Hofer-Zehnder capacity of $D$
is finite.
For an alternative argument we refer to
or Macarini-Schlenk \cite{msch05},
which only requires finiteness of the Hofer-Zehnder capacity
of a neighbourhood of $M$.
Therefore, Theorem \ref{thm:main} is implied by:

\begin{thm}
\label{thm:hzcapfin}
 Each open and relatively compact subset of
 $\big(T^*S^2,\omega_{\sigma}\big)$
 has finite Hofer-Zehnder capacity.
\end{thm}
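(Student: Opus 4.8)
The plan is to bound the Hofer--Zehnder capacity of an arbitrarily large co-disk bundle by embedding it symplectically into a product of two $2$-spheres and invoking the theory of pseudoholomorphic spheres. Since $c_{HZ}$ is monotone under symplectic embeddings, it suffices to treat one cofinal family of open relatively compact sets. Fixing the round metric $g$ on $S^2$, every open relatively compact $U\subset T^*S^2$ is contained in the co-disk bundle $D_R=\{u\in T^*S^2:|u|_g<R\}$ as soon as $R>\sup_{\overline U}|\cdot|_g$. Hence it is enough to show $c_{HZ}(D_R,\omega_\sigma)<\infty$ for every $R>0$. Writing the magnetic form as $\sigma=s\,\omega_0$ with $\omega_0$ the round area form and $s\in\R$, the $\omega_\sigma$-area of the zero section equals $\bigl|\int_{S^2}\sigma\bigr|=4\pi|s|$.

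\emph{The embedding.} For each fixed $R$ I would construct a symplectic embedding
\[
\Phi_R\co(D_R,\omega_\sigma)\hookrightarrow\bigl(S^2\times S^2,\;\Omega_a\oplus\Omega_b\bigr),
\]
where $\Omega_a,\Omega_b$ are area forms of total areas $a\ge b>0$ subject to $a-b=4\pi|s|$, with $b$ taken as large as the radius $R$ requires. The guiding model is the classical identification of $S^2\times S^2$ with the diagonal $\Delta$ removed with the total space of the normal bundle of the antidiagonal $\bar\Delta$: this bundle has Euler number $-2$ and is therefore diffeomorphic to $T^*S^2$, the zero section corresponding to $\bar\Delta$ --- a symplectic sphere of area $a-b$ when $a\neq b$, and a suitably oriented Lagrangian sphere when $a=b$. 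One builds $\Phi_R$ using the $SO(3)$-symmetry: away from the zero section $T^*S^2$ is, in coordinates adapted to the round geodesic flow, a trivial $\R P^3$-bundle over $(0,\infty)$ on which $\rmd\lambda+\pi^*\sigma$ is exact, and one prescribes $\Phi_R$ radially so that this form is carried onto the split form, the magnetic parameter $s$ entering precisely through the asymmetry $a-b$ of the two spherical factors, while a symmetric enlargement of $a,b$ enlarges the admissible radius $R$. A Moser deformation pins down the symplectic form in a neighbourhood of the zero section, and the borderline case $s=0$ reduces to the standard realisation of the Liouville $T^*S^2$ as the complement of the diagonal.

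\emph{Conclusion.} For every tame almost complex structure on $\bigl(S^2\times S^2,\Omega_a\oplus\Omega_b\bigr)$ the classes $[S^2\times\mathrm{pt}]$ and $[\mathrm{pt}\times S^2]$ are represented by nonconstant pseudoholomorphic spheres; by the theorem of Hofer and Viterbo on the presence of holomorphic spheres --- in the sharpened form due to Lu in terms of Gromov--Witten invariants --- this gives $c_{HZ}(V,\Omega_a\oplus\Omega_b)\le\min(a,b)<\infty$ for every open $V\subseteq S^2\times S^2$. Applying this to $V=\Phi_R(D_R)$ and pulling back yields $c_{HZ}(D_R,\omega_\sigma)\le\min(a,b)<\infty$, which by the reduction above proves the theorem. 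No uniform bound over $R$ is to be expected, nor needed, since $b\to\infty$ as $R\to\infty$.

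\emph{Main obstacle.} The delicate step is the explicit construction of $\Phi_R$: one must produce an honestly symplectic embedding of an \emph{arbitrarily large} twisted co-disk bundle into a fixed closed symplectic four-manifold, control precisely how the radius $R$ constrains the areas $a,b$, match the area of the zero section to the magnetic flux $4\pi|s|$, and handle the degeneration at $s=0$ where the zero section passes from symplectic to Lagrangian.
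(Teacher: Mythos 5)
Your overall architecture --- land in a split $\bigl(S^2\times S^2,\Omega_a\oplus\Omega_b\bigr)$ and finish with Hofer--Viterbo/Lu via holomorphic spheres in the classes $[S^2\times\mathrm{pt}]$ and $[\mathrm{pt}\times S^2]$ --- matches the paper's endgame, and your numerology (antidiagonal of self-intersection $-2$, symplectic area $a-b$ matching the flux, Lagrangian in the case $a=b$) is consistent. But the step you flag as the ``main obstacle'' is not a technical detail to be deferred: the explicit symplectic embedding $\Phi_R$ of an arbitrarily large \emph{twisted} co-disc bundle into a split form \emph{is} the entire content of the theorem, and you have not constructed it. An $SO(3)$-equivariant radial matching on $\R\times\R P^3$ plus a Moser argument near the zero section is plausible in outline, but one must actually verify that the characteristic data of $\omega_\sigma$ on the co-sphere levels can be carried onto those of the split form on the complement of the diagonal, and quantify how $R$ constrains $(a,b)$; none of this is done. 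There is also an unjustified normalization at the outset: you write $\sigma=s\,\omega_0$ with $s$ constant, whereas the theorem allows any closed $2$-form. This is repairable --- replacing $\sigma$ by the cohomologous $s\,\omega_0$ changes $\omega_\sigma$ by $\pi^*\rmd\beta$, and the fibrewise translation $u\mapsto u+\beta$ is a symplectomorphism preserving relative compactness --- but it must be said.

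The paper's proof is designed precisely to avoid the construction you are missing, by two moves you do not have. First, a cut-off lemma (Lemma \ref{lem:cutoff}): writing $\pi^*\sigma=\rmd\tau$ on the symplectization $\R\times\R P^3$ with $\tau$ pulled back from $\R P^3$, one interpolates $\omega_\sigma$ to the untwisted $\rmd\lambda$ outside a large co-disc bundle via $\rmd(\rme^t\alpha)+\rmd(f\tau)$, the nondegeneracy costing only that the cut-off be spread over a long interval. This reduces everything to embedding a \emph{Liouville} co-disc bundle into $S^2\times S^2$ (the standard Weinstein neighbourhood of the antidiagonal after scaling), with no matching of twisted forms required. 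Second, pushing the cut-off form forward yields some a priori non-split symplectic form $\omega$ on $\C P^1\times\C P^1$, and McDuff's classification of rational and ruled symplectic $4$-manifolds (minimality follows from evenness of the intersection form) supplies an abstract symplectomorphism to a split form $a\,\omega_{\fs}\oplus b\,\omega_{\fs}$ --- exactly the conclusion your $\Phi_R$ was supposed to deliver by hand. If you want to salvage your direct route you must either carry out the equivariant normal-form computation in full, or import these two ingredients. Finally, note that the capacity Lu's theorem controls is the Frauenfelder--Ginzburg--Schlenk variant, which is a priori \emph{smaller} than $c_{HZ}$; the paper addresses this by citing \cite[Appendix B]{fgsch05} to recover almost existence, whereas your unqualified bound $c_{HZ}(V)\le\min(a,b)$ claims more than the cited results give.
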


We remark that the Hofer-Zehnder capacity
of any twisted cotangent bundle (of dimension $2n$)
is infinite as the cotangent bundle of $\R^n$
embeds symplectically as observed by Lu \cite[Theorem D(ii)]{lu98}.

Hamiltonian systems on twisted cotangent bundles
of closed manifolds were first introduced by Arnol'd in \cite{arn61},
where he considers as Hamiltonian function
the kinetic energy
with respect to some Riemannian metric.
These systems are significant
since they model many physical phenomena
in classical mechanics:
a charged particle under the influence of a magnetic force,
a rigid body in an axially symmetric field.
At the beginning of the Eighties,
Novikov got interested in such systems
from the point of view of
periodic trajectories,
see \cite{nosh81,nov82}.
This direction of research
was later continued by Ta\u\i manov
in a series of works,
see \cite{tai92} for a survey.

Starting with the work of Ginzburg \cite{gin87}
and Polterovich \cite{pol98}
symplectic approaches to the existence of periodic orbits proved also to be effective,
see also Macarini \cite{mac04}.
Among the tools used for this purpose
the Hofer-Zehnder capacity,
with which we are concerned here,
plays a predominant role.
In the setting of twisted cotangent bundles
it was studied by Lu \cite[Theorem E]{lu98},
Ginzburg-G\"urel \cite[Section 2.3]{gigu04},
Schlenk \cite[Section 3.3]{sch06},
Frauenfelder-Schlenk \cite[Theorem 4.B]{fsch07},
Cieliebak-Frauenfelder-Paternain \cite{cfp10},
and Irie \cite{iri11,iri14}.
However,
almost existence for high energy levels
and non-exact magnetic forms
was left open for the two-sphere.

Finally, we would like to mention
the groundbreaking work of Contreras \cite{con06}
on autonomous Lagrangian systems,
which paved the way to tremendous advancements
for the problem of closed orbits on twisted cotangent bundles,
see Merry \cite{mer10} and \cite{assben14}.
For multiplicity results when the base manifold is a surface
see also Abbondandolo-Macarini-Paternain \cite{amp15}
as well as \cite{ammp14,assben15}.

The idea of the proof of Theorem \ref{thm:hzcapfin}
is to show that the twisted symplectic form
can be interpolated to the canonical 
Liouville symplectic form.
A compact neighbourhood of the zero-section
that supports the interpolation in turn
embeds symplectically into $S^2\times S^2$
provided with a split symplectic form.
The Hofer-Zehnder capacity of
such a symplectic manifold is finite,
so that the result will follows from the monotonicity
property of the capacity.


\section{The cut off lemma\label{sec:cutoff}}

That $\omega_{\sigma}$ is indeed a symplectic form
follows from the local description
\[
\omega_{\sigma}=
\rmd p_i\wedge\rmd q^i+
\sigma_{jk}(q)\rmd q^j\wedge\rmd q^k
\]
with respect to canonical
$(q,p)$-coordinates.
In fact, the volume forms
$\omega_{\sigma}\wedge\omega_{\sigma}$
and $\rmd\lambda\wedge\rmd\lambda$
are the same.
Moreover, the canonical Liouville vector field $Y$,
which equals $p_i\partial_{p_i}$ locally,
preserves the twisting $\pi^*\sigma$.
In fact, $i_Y\pi^*\sigma=0$.

\begin{lem}
\label{lem:cutoff}
 For any compact subset $K$ of $T^*S^2$
 there exists a symplectic form on $T^*S^2$ 
 that coincides with $\omega_{\sigma}$
 in a neighbourhood of $K$
 and with $\rmd\lambda$ in a neighbourhood
 of the end.
\end{lem}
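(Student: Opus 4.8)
The plan is to cut off the twisting term $\pi^*\sigma$ outside a large disc bundle, the only subtlety being that repairing the loss of closedness introduces an error term that must be kept small. Fix an auxiliary Riemannian metric on $S^2$ and let $r\co T^*S^2\ra[0,\infty)$ be the induced fibrewise norm, so that $r$ is smooth on $T^*S^2\setminus S^2$, the sublevel sets $\{r\le c\}$ are compact and $\{r>c\}$ is a neighbourhood of the end; write $\mathrm{pr}\co T^*S^2\setminus S^2\ra S^*S^2$ for the radial retraction $u\mapsto u/|u|$ onto the unit cotangent bundle. In this description $\lambda=r\,\mathrm{pr}^*\theta_0$, where $\theta_0:=\lambda|_{S^*S^2}$ is the standard contact form, and $\pi^*\sigma=\mathrm{pr}^*\bar\pi^*\sigma$, where $\bar\pi\co S^*S^2\ra S^2$ is the projection. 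Since $H^2(S^*S^2;\R)=0$ (for instance because $S^*S^2\cong\R\mathrm{P}^3$), the closed form $\bar\pi^*\sigma$ admits a primitive $\theta$ on $S^*S^2$; put $\beta:=\mathrm{pr}^*\theta$, a $1$-form on $T^*S^2\setminus S^2$ with no radial component and with $\rmd\beta=\pi^*\sigma$.

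Given $K$, choose $R>\max_K r$ and a smooth cut-off $\chi\co[0,\infty)\ra[0,1]$ with $\chi\equiv1$ on $[0,R]$, $\chi\equiv0$ on $[2R,\infty)$ and $\sup|\chi'|\le 2/R$. Define $\Omega:=\omega_\sigma$ on $\{r<R\}$ and $\Omega:=\rmd\lambda+\rmd\big(\chi(r)\,\beta\big)$ on $\{r>0\}$. On the overlap $\{0<r<R\}$ we have $\chi\equiv1$, hence $\rmd(\chi\beta)=\rmd\beta=\pi^*\sigma$ and both formulas give $\omega_\sigma$; thus $\Omega$ is a well-defined, smooth, closed $2$-form on all of $T^*S^2$, it equals $\omega_\sigma$ on the neighbourhood $\{r<R\}$ of $K$, and it equals $\rmd\lambda$ on the neighbourhood $\{r>2R\}$ of the end. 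Non-degeneracy is clear on $\{r<R\}$ and on $\{r>2R\}$, so everything comes down to the collar $\{R\le r\le 2R\}$.

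There, writing $\Omega=\rmd\lambda+\chi(r)\,\pi^*\sigma+\chi'(r)\,\rmd r\wedge\beta$, the wedge squares of $\pi^*\sigma$ and of $\rmd r\wedge\beta$ vanish, and so does $\rmd\lambda\wedge\pi^*\sigma$ — this is the identity responsible for the equality $\omega_\sigma\wedge\omega_\sigma=\rmd\lambda\wedge\rmd\lambda$ — so that
\[
\Omega\wedge\Omega=\rmd\lambda\wedge\rmd\lambda+2\,\chi'(r)\,\rmd\lambda\wedge\rmd r\wedge\beta .
\]
From $\lambda=r\,\mathrm{pr}^*\theta_0$ one computes $\rmd\lambda\wedge\rmd\lambda=2r\,\rmd r\wedge\mathrm{pr}^*(\theta_0\wedge\rmd\theta_0)$ and $\rmd\lambda\wedge\rmd r\wedge\beta=r\,\rmd r\wedge\mathrm{pr}^*(\theta\wedge\rmd\theta_0)$; since $\theta_0\wedge\rmd\theta_0$ is a volume form on $S^*S^2$ we may write $\theta\wedge\rmd\theta_0=h\,\theta_0\wedge\rmd\theta_0$ with $h\co S^*S^2\ra\R$ smooth, and therefore
\[
\Omega\wedge\Omega=\big(1+\chi'(r)\,(h\circ\mathrm{pr})\big)\,\rmd\lambda\wedge\rmd\lambda .
\]
As $S^*S^2$ is compact, $h$ is bounded, say $|h|\le C$; picking $R>2C$ at the start ensures $\sup|\chi'|\le 2/R<1/C$, so $1+\chi'(h\circ\mathrm{pr})>0$ everywhere and $\Omega\wedge\Omega$ is a nowhere-vanishing top form. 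Hence $\Omega$ is symplectic. The genuine obstacle is exactly this last point: the naive cut-off $\chi(r)\,\pi^*\sigma$ is not closed, and making it closed costs the error term $\chi'(r)\,\rmd r\wedge\beta$, which is tamed only because $\beta$ is chosen radially constant (equivalently, because $\pi^*\sigma$ is Liouville-invariant) and because the cut-off is stretched over a radial interval of length comparable to $R\to\infty$.
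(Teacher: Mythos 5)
Your argument is essentially the paper's proof rewritten in the radial coordinate $r=\rme^{t}$ instead of symplectisation coordinates: the same primitive on $S^*S^2\cong\R P^3$ obtained from $H^2(\R P^3;\R)=0$, the same use of the radial invariance of $\pi^*\sigma$, the same identity $\rmd\lambda\wedge\pi^*\sigma=0$ extracted from the equality of volume forms, and the same cut-off stretched over a collar of length $\sim R$ with derivative $O(1/R)$. There is, however, one slip in your expansion of $\Omega\wedge\Omega$: of the six terms in the square of $\rmd\lambda+\chi\,\pi^*\sigma+\chi'\,\rmd r\wedge\beta$ you account for five but silently drop the cross term
\[
2\chi\chi'\;\pi^*\sigma\wedge\rmd r\wedge\beta
\;=\;2\chi\chi'\;\rmd r\wedge\mathrm{pr}^*\bigl(\theta\wedge\rmd\theta\bigr),
\]
which does not vanish in general: $\theta\wedge\rmd\theta=\theta\wedge\bar\pi^*\sigma$ is zero only where $\theta$ annihilates the fibre direction, and (for instance when $\sigma$ is a non-exact area form) that would force $\theta$ to be basic for the circle bundle $\bar\pi$ and hence $\sigma$ to be exact. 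This is precisely the term the paper retains as $\dot ff\,\tau\wedge\rmd\tau$ in its expression for the square. The omission is harmless rather than fatal: writing $\theta\wedge\rmd\theta=g\,\theta_0\wedge\rmd\theta_0$ with $g$ bounded on the compact $S^*S^2$, the dropped term contributes $\bigl(\chi\chi'g/r\bigr)\,\rmd\lambda\wedge\rmd\lambda$ with $\bigl|\chi\chi'g/r\bigr|\le 2\sup|g|/R^2$ on the collar $\{R\le r\le 2R\}$, so positivity of $\Omega\wedge\Omega$ for $R$ large still holds. You should correct the displayed identity to $\Omega\wedge\Omega=\bigl(1+\chi'(h\circ\mathrm{pr})+\chi\chi'(g\circ\mathrm{pr})/r\bigr)\,\rmd\lambda\wedge\rmd\lambda$ and enlarge $R$ accordingly; with that emendation your proof is complete and coincides in substance with the paper's.
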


\begin{proof}
 We provide $S^2$ with the metric induced
 by the inclusion into $\R^3$.
 This allows us to identify the unit co-sphere bundle
 of $S^2$ with $\R P^3$.
 The restriction of $\lambda$ to the tangent bundle
 of $\R P^3$ induces a contact form $\alpha$ on $\R P^3$.
 The set of all non-zero co-vectors $u\in T^*S^2$
 can be identified with the symplectisation
 \[
 \big(
 \R\times\R P^3,\rmd(\rme^t\alpha)
 \big)
 \]
 symplectically via the map
 \[
 u\longmapsto
 \big(
 \ln|u|,u/|u|\,
 \big),
 \]
 which sends the flow lines of $Y$
 to the flow lines of $\partial_t$,
 $t\in\R$,
 and hence $\lambda$ to $\rme^t\alpha$.
 
 We claim that $\pi^*\sigma$ restricted to
 the symplectisation of $\R P^3$
 has a primitive $\tau$
 that is a pull back along the inclusion
 $\R P^3\equiv\{0\}\times\R P^3\subset\R\times\R P^3$.
 Indeed, write
 \[
 \pi^*\sigma=
 a\,\rmd t\wedge\gamma_t+\eta_t
 \]
 for the image of $\pi^*\sigma$ on $\R\times\R P^3$,
 where $a$ is a function on $\R\times\R P^3$
 and $\gamma_t$, resp., $\eta_t$ is a $t$-parameter family
 of $1$-, resp., $2$-forms on  $\R P^3$.
 As observed at the beginning of the section
 inner multiplication of the twisting $\pi^*\sigma$
 by $Y\equiv\partial_t$
 yields zero, so that
 $a\gamma_t=0$ and, hence,
 $\pi^*\sigma=\eta_t$ on the symplectisation.
 Therefore,
 we get $0=\dot\eta_t\wedge\rmd t+\rmd\eta_t$
 because $\sigma$ is closed,
 where the dot indicates the time derivative.
 This implies $\dot\eta_t=0$ again by using $\partial_t$,
 and hence $\rmd\eta_t=0$.
 In other words, $\pi^*\sigma=\eta$
 for a closed $2$-form $\eta$ on $\R P^3$,
 which must have a primitive $\tau$
 because the second de Rham cohomology of
 $\R P^3$ vanishes.
 In conclusion, $\pi^*\sigma=\rmd\tau$ on $\R\times\R P^3$.
 
 Let $t_0$ be a sufficiently large real number.
 Let $f$ be a cut off function on $\R$,
 that is identically $1$ on $(-\infty,t_0]$, resp., $0$
 on $[t_0+R,\infty)$ for some positive real number $R$,
 and has derivative $\dot f$
 with values in the interval $(-2/R,0]$.
 In order to prove the lemma
 we will show that
 \[
 \rmd(\rme^t\alpha)+\rmd(f\tau)
 \]
 is a symplectic form for sufficiently large $R$.
 Because the volume forms induced by
 $\rmd(\rme^t\alpha)$ and the twisted
 $\rmd(\rme^t\alpha)+\rmd\tau$
 coincide
 their difference
 $-2\rme^t\rmd t\wedge\alpha\wedge\rmd\tau$
 must vanish.
 Hence
 \[
 \alpha\wedge\rmd\tau=0.
 \]
 Therefore,
 the square of $\rmd(\rme^t\alpha)+\rmd(f\tau)$
 equals
 \[
 2\rmd t\wedge
 \Big(
 \rme^{2t}\,\alpha\wedge\rmd\alpha+
 \dot f\rme^t\;\tau\wedge\rmd\alpha+
 \dot ff\;\tau\wedge\rmd\tau
 \Big).
 \]
 This is indeed a volume form provided that
 the $3$-forms
 \[
 \rme^t\,\alpha\wedge\rmd\alpha+
 \dot f
 \Big(
 \tau\wedge\rmd\alpha+
 f\rme^{-t}\;\tau\wedge\rmd\tau
 \Big)
 \]
 on $\R P^3$ are positive for all $t$.
 For $R$ sufficiently large this is the case
 because $|\dot f|$ is bounded by $2/R$
 and $\dot f$ has support in $[t_0,t_0+R]$.
\end{proof}

\begin{rem}
 The existence of a primitive of $\eta$
 follows by \cite[Lemma 12.6]{fsch07} as well.
 In fact,
 in the case of a general connected base manifold $Q$
 the Gysin sequence implies
 that $\eta$ admits a primitive
 if and only if the cohomology class of $\eta$
 is a multiple of the Euler class of $Q$.
 In other words, $\eta$ admits a primitive
 if either the magnetic form is exact
 or $Q$ is a surface different from a torus,
 cf.\ \cite[Remark 12.7]{fsch07}.
\end{rem}


\section{Capacity bounds\label{sec:capbounds}}

Let $U$ be an open relatively compact subset
of the twisted cotangent bundle
$\big(T^*S^2,\omega_{\sigma}\big)$.
In view of Lemma \ref{lem:cutoff}
we can cut off the twisting outside a neighbourhood
of $\overline{U}$
so that the resulting symplectic form is standard
on the complement of a co-disc bundle $V$
of sufficiently large radius.

We provide $\C P^1\times\C P^1$
with the split symplectic form
$C\big(\omega_{\fs}\oplus\omega_{\fs}\big)$,
where $\omega_{\fs}$ denotes the Fubini-Study form,
so that the anti-diagonal is a Lagrangian $2$-sphere.
The co-disc bundle $V$
equipped with the Liouville symplectic form
embeds symplectically
for $C>0$ sufficiently large.
Pushing forward the cut off twisted symplectic form
constructed in Lemma \ref{lem:cutoff}
we obtain a symplectic manifold $(W,\omega)$
diffeomorphic to $\C P^1\times\C P^1$,
so that $(U,\omega_{\sigma})$
embeds symplectically.
Therefore,
in order to prove Theorem \ref{thm:hzcapfin}
it suffices to show that $(W,\omega)$
has finite Hofer-Zehnder capacity.

Observe that the diagonal of $\C P^1\times\C P^1$
is a symplectically embedded $2$-sphere
of self-intersection $2$ in $(W,\omega)$.
Because the intersection form of $\C P^1\times\C P^1$
is even
the symplectic manifold $(W,\omega)$
is minimal.
By the classification result
\cite[Corollary 1.6]{mcd90}
of McDuff, $(W,\omega)$
is symplectomorphic to $\C P^1\times\C P^1$
provided with a split symplectic form
$a\,\omega_{\fs}\oplus b\,\omega_{\fs}$,
$a,b>0$,
cf.\ \cite[Remark (1) on p.\ 6]{lamc96}.
Using Lu's finiteness result
\cite[Theorem 1.21]{lu06},
which is based on work of
Hofer-Viterbo \cite{hovi92},
Floer-Hofer-Salamon \cite{fhs95}, and
Liu-Tian \cite{liutia00} (cf.\ McDuff-Slimowitz
\cite{mcsl01} and \cite{mcsa04}),
we obtain $(a+b)\pi$ for the variant of the
Hofer-Zehnder capacity of $(W,\omega)$
that is defined by
Frauenfelder-Ginzburg-Schlenk
in \cite{fgsch05}.
According to \cite[Appendix B]{fgsch05}
almost existence still is implied.


\begin{ack}
  We thank Alberto Abbondandolo, Peter Albers, and
  Hansj\"org Geiges for their interest in our work
  as well as Matthew Strom Borman,
  Jungsoo Kang and Stefan Suhr
  for enlightening discussions.
\end{ack}



\begin{thebibliography}{10}

\bibitem{amp15}
  {\sc  A. Abbondandolo, L. Macarini, G. P. Paternain},
  On the existence of three closed magnetic geodesics for
              subcritical energies,
  \textit{Comment. Math. Helv.} {\bf 90} (2015),
  155--193.

\bibitem{ammp14}
  {\sc  A. Abbondandolo, L. Macarini, M. Mazzucchelli, G. P. Paternain},
  Infinitely many periodic orbits of exact magnetic flows on surfaces for almost every subcritical energy level,
  preprint (2014),
  {\sc arXiv:1404.7641}.

\bibitem{arn61}
  {\sc V. I. Arnol'd},
  Some remarks on flows of line elements and frames,
  \textit{Dokl. Akad. Nauk SSSR} {\bf 138} (1961),
  255--257.

\bibitem{assben14}
  {\sc L. Asselle, G. Benedetti},
  Periodic orbits of magnetic flows for weakly exact unbounded forms and for spherical manifolds,
  preprint (2014),
  {\sc arXiv: 1412.0531}.

\bibitem{assben15}
  {\sc L. Asselle, G. Benedetti},
  Infinitely many periodic orbits of non-exact oscillating magnetic fields on surfaces
  with genus at least two for almost every low energy level,
  \textit{Calc. Var. Partial Differential Equations},
  {\sc doi:10.1007/s00526-015-0834-1},
  to appear.

\bibitem{cfp10}
  {\sc K. Cieliebak, U. Frauenfelder, G. P. Paternain},
  Symplectic topology of {M}a\~n\'e's critical values,
  \textit{Geom. Topol.} {\bf 14} (2010),
  1765--1870.

\bibitem{con06}
  {\sc G. Contreras},
  The {P}alais-{S}male condition on contact type energy levels
              for convex {L}agrangian systems,
  \textit{Calc. Var. Partial Differential Equations} {\bf 27} (2006),
  321--395.
    
\bibitem{fhs95}
  {\sc A. Floer, H. Hofer, D. Salamon},
  Transversality in elliptic {M}orse theory for the symplectic
              action,
  \textit{Duke Math. J.} {\bf 80} (1995),
  251--292.
    
\bibitem{fgsch05}
  {\sc U. Frauenfelder, V. L. Ginzburg, F. Schlenk},
  Energy capacity inequalities via an action selector.
  In
  \textit{Geometry, spectral theory, groups, and dynamics},
  Contemp. Math., 387, Amer. Math. Soc., Providence, RI,
  2005, 129--152.
    
\bibitem{fsch07}
  {\sc U. Frauenfelder, F. Schlenk},
  Hamiltonian dynamics on convex symplectic manifolds,
  \textit{Israel J. Math.} {\bf 159} (2007),
  1--56.
    
\bibitem{gin87}
  {\sc V. L. Ginzburg},
  New generalizations of {P}oincar{\'e}'s geometric theorem,
  \textit{Funktsional. Anal. i Prilozhen.} {\bf 21} (1987),
  16--22.
    
\bibitem{gigu04}
  {\sc V. L. Ginzburg, B. Z. G{\"u}rel},
  Relative {H}ofer-{Z}ehnder capacity and periodic orbits in
              twisted cotangent bundles,
  \textit{Duke Math. J.} {\bf 123} (2004),
  1--47.

\bibitem{hovi92}
  {\sc H. Hofer, C. Viterbo},
   The {W}einstein conjecture in the presence of holomorphic
              spheres,
  \textit{Comm. Pure Appl. Math.} {\bf 45} (1992),
  583--622.

\bibitem{hz87}
  {\sc H. Hofer, E. Zehnder},
  Periodic solutions on hypersurfaces and a result by {C}.\
              {V}iterbo,
  \textit{Invent. Math.} {\bf 90} (1987),
  1--9.

\bibitem{hz94}
  {\sc H. Hofer, E. Zehnder},
  \textit{Symplectic invariants and {H}amiltonian dynamics},
  Birkh\"auser Verlag, Basel, (1994),
  xiv+341.

\bibitem{iri11}
  {\sc K. Irie},
  Hofer-Zehnder capacity and a Hamiltonian circle action with noncontractible orbits,
  preprint (2011),
  {\sc arXiv: 1112.5247}.

\bibitem{iri14}
  {\sc  K. Irie},
  Hofer-Zehnder capacity of unit disk cotangent bundles and the loop product,
  \textit{J. Eur. Math. Soc. (JEMS)} {\bf 16} (2014),
  2477--2497.

\bibitem{lamc96}
  {\sc  F. Lalonde, D. McDuff},
  {$J$}-curves and the classification of rational and ruled
              symplectic {$4$}-manifolds,
  In \textit{Contact and symplectic geometry ({C}ambridge, 1994)},
  {Publ. Newton Inst.}, {8},
  {Cambridge Univ. Press, Cambridge}, {1996},
  3--42.

\bibitem{liutia00}
  {\sc G. Liu, G. Tian},
  Weinstein conjecture and {GW}-invariants,
  \textit{Commun. Contemp. Math.} {\bf 2} (2000),
  405--459.
  
\bibitem{lu98}
  {\sc G. Lu},
  The {W}einstein conjecture on some symplectic manifolds
              containing the holomorphic spheres,
  \textit{Kyushu J. Math.} {\bf 52} (1998),
  331--351.
  
\bibitem{lu06}
  {\sc G. Lu},
  Gromov-{W}itten invariants and pseudo symplectic capacities,
  \textit{Israel J. Math.} {\bf 156} (2006),
  1--63.
  
\bibitem{mac04}
  {\sc L. Macarini},
  Hofer-{Z}ehnder capacity and {H}amiltonian circle actions,
  \textit{Commun. Contemp. Math.} {\bf 6} (2004),
  913--945.
  
\bibitem{msch05}
  {\sc L. Macarini, F. Schlenk},
   A refinement of the {H}ofer-{Z}ehnder theorem on the existence
              of closed characteristics near a hypersurface,
  \textit{Bull. London Math. Soc.} {\bf 37} (2005),
  297--300.
  
\bibitem{mcd90}
  {\sc D. McDuff},
  The structure of rational and ruled symplectic
              {$4$}-manifolds,
  \textit{J. Amer. Math. Soc.} {\bf 3} (1990),
  679--712.

\bibitem{mcsa04}
  {\sc D. McDuff, D. Salamon},
  \textit{$J$-holomorphic Curves and Symplectic Topology},
  Amer. Math. Soc. Colloq. Publ. {\bf 52},
  American Mathematical Society, Providence, RI (2004).

\bibitem{mcsl01}
  {\sc D. McDuff, J. Slimowitz},
  Hofer-{Z}ehnder capacity and length minimizing {H}amiltonian
              paths,
  \textit{Geom. Topol.} {\bf 5} (2001),
  799--830.

\bibitem{mer10}
  {\sc W. J. Merry},
  Closed orbits of a charge in a weakly exact magnetic field,
  \textit{Pacific J. Math.} {\bf 247} (2010),
  189--212.

\bibitem{nov82}
  {\sc S. P. Novikov},
   The {H}amiltonian formalism and a multivalued analogue of
              {M}orse theory,
  \textit{Uspekhi Mat. Nauk} {\bf 37} (1982),
   3--49.

\bibitem{nosh81}
  {\sc S. P. Novikov, I. Shmel'tser},
  Periodic solutions of {K}irchhoff equations for the free
              motion of a rigid body in a fluid and the extended
              {L}yusternik-{S}hnirel'man-{M}orse theory. {I},
  \textit{Funktsional. Anal. i Prilozhen.} {\bf 15} (1981),
  54--66.
     
\bibitem{pol98}
  {\sc L. Polterovich},
  Geometry on the group of {H}amiltonian diffeomorphisms,
  In 
  \textit{Proceedings of the {I}nternational {C}ongress of
              {M}athematicians, {V}ol. {II} ({B}erlin, 1998)},
  {Doc. Math.} {Extra Vol. II} (1998),
  401--410.

\bibitem{sch06}
  {\sc F. Schlenk},
  Applications of {H}ofer's geometry to {H}amiltonian dynamics,
  \textit{Comment. Math. Helv.} {\bf 81} (2006),
  105--121.

\bibitem{str90}
  {\sc M. Struwe},
  Existence of periodic solutions of {H}amiltonian systems on
              almost every energy surface,
  \textit{Bol. Soc. Brasil. Mat. (N.S.)} {\bf 20} (1990),
  49--58.

\bibitem{tai92}
  {\sc I. A. Ta{\u\i}manov},
  Closed extremals on two-dimensional manifolds,
  \textit{Uspekhi Mat. Nauk} {\bf 47} (1992),
  143--185.


\end{thebibliography}
\end{document}